\theoremstyle{plain}
\newtheorem{theorem}{Theorem}[section]
\newtheorem{lemma}[theorem]{Lemma}
\newtheorem{proposition}[theorem]{Proposition}
\newtheorem{corollary}[theorem]{Corollary}
\newtheorem{problem}[theorem]{Problem}
\theoremstyle{definition}
\theoremstyle{remark}
\newtheorem{remark}[theorem]{Remark}
\begin{document}

   \title{On Castelnuovo theory and non-existence of smooth isolated curves in quintic threefolds}

\author{Xun Yu}
\address{Department of Mathematics, The Ohio State University, Columbus, OH, 43210-1174, USA}
\email{yu@math.ohio-state.edu,\; yxn100135@hotmail.com}

\maketitle

\begin{abstract}
We give some necessary conditions for a smooth irreducible curve $C\subset \mathbb{P}^4$ to be isolated in a smooth quintic threefold, and also find a lower bound for $h^1(\mathcal{N}_{C/{\mathbb{P}^4}})$. Combining these with beautiful results in Castelnuovo theory, we prove certain non-existence results on smooth curves in smooth quintic threefolds. As an application, we can prove Knutsen's list of examples of smooth isolated curves in general quintic threefolds is complete up to degree 9.
\end{abstract}
\section{Introduction}

We say a smooth projective curve $C$ is isolated in an ambient smooth projective variety $Y$ if $h^0(\mathcal{N}_{C/Y})=0$, where $\mathcal{N}_{C/Y}$ is the normal bundle of $C$ in $Y$. A Calabi-Yau threefold $Y$ has the nice property that the expected dimension of the deformation space of any l.c.i curve lying in $Y$ is zero. So it is quite reasonable to expect that Calabi-Yau threefolds contain isolated curves. More specifically, we can ask the following:
\begin{problem}
Let $d>0$ and $g\geq 0$ be integers. Does a general complete intersection Calabi-Yau (CICY) threefold (of a particular complete intersection type) contain a smooth isolated curve of degree $d$ and genus $g$ ? 
\end{problem}

Actually, many examples of smooth isolated curves in general CICY threefolds have been found by Knutsen's technique (Cf. [3], [10]). However, the highest genus $g$ known so far for which there exists a smooth isolated curve of genus $g$ in a general CICY threefold is 29. It is believed/conjectured that genera of isolated curves in CICY threefolds should be unbounded. 

In this paper we give some necessary conditions (Lemma 2.1, Theorem 2.7, and Theorem 2.9) for curves to be isolated in smooth quintic threefolds, and then using beautiful results (Theorem 3.1) in Castelnuovo theory we can prove certain non existence results (Theorem 3.5). As an application, we can prove Knutsen's list of examples of smooth isolated curves in general quintic threefolds is complete up to degree 9 (Corollary 3.6). It is also hoped that the non-existence results in this paper may be useful for people to get more existence results. For simplicity, this paper only considers non-existence of smooth isolated curves in quintic threefolds instead of CICY threefolds of various types.

\vspace{3mm}

$\mathbf{}$

\subsection*{Notations} We work over complex numbers $\mathbb{C}$. A $curve$ means a smooth irreducible projective curve.

\subsection*{Acknowledgments} The author would like to thank his advisor Herb Clemens for continuous support and helpful conversations. 

\section{Necessary conditions for curves to be isolated in smooth quintic threefolds}

\begin{lemma}
Let $C\subset \mathbb{P}^4$ be a curve and $Y \subset \mathbb{P}^4$ be a smooth quintic 3-fold. If $C\subset Y$ and $C$ is isolated in $Y$, then $h^i(\mathcal{N}_{C/\mathbb{P}^4})=h^i(\mathcal{O}_C(5)), \; i=0,1$.
\end{lemma}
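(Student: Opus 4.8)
The plan is to relate the two normal bundles through the standard short exact sequence for the nested inclusion $C\subset Y\subset\mathbb{P}^4$, and to exploit the Calabi-Yau nature of the quintic $Y$ to pin down $\mathcal{N}_{C/Y}$. First I would write down
\[
0 \to \mathcal{N}_{C/Y} \to \mathcal{N}_{C/\mathbb{P}^4} \to \mathcal{N}_{Y/\mathbb{P}^4}|_C \to 0 .
\]
Since $Y$ is a quintic hypersurface, $\mathcal{N}_{Y/\mathbb{P}^4}\cong\mathcal{O}_Y(5)$, so restricting to $C$ gives $\mathcal{N}_{Y/\mathbb{P}^4}|_C\cong\mathcal{O}_C(5)$, and the sequence becomes
\[
0 \to \mathcal{N}_{C/Y} \to \mathcal{N}_{C/\mathbb{P}^4} \to \mathcal{O}_C(5) \to 0 .
\]

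The crucial point, and the one I expect to carry the argument, is to show that $\mathcal{N}_{C/Y}$ is acyclic, i.e. $h^0(\mathcal{N}_{C/Y})=h^1(\mathcal{N}_{C/Y})=0$. The hypothesis that $C$ is isolated gives $h^0(\mathcal{N}_{C/Y})=0$ by definition. For the vanishing of $h^1$ I would use that $Y$ is Calabi-Yau: by adjunction $\omega_C\cong\omega_Y|_C\otimes\det\mathcal{N}_{C/Y}$, and $\omega_Y\cong\mathcal{O}_Y$, so $\det\mathcal{N}_{C/Y}\cong\omega_C$. Because $\mathcal{N}_{C/Y}$ has rank $2$, there is a canonical isomorphism $\mathcal{N}_{C/Y}^{\vee}\cong\mathcal{N}_{C/Y}\otimes(\det\mathcal{N}_{C/Y})^{-1}$, whence $\mathcal{N}_{C/Y}^{\vee}\otimes\omega_C\cong\mathcal{N}_{C/Y}$. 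Serre duality on the curve $C$ then yields
\[
h^1(\mathcal{N}_{C/Y})=h^0(\mathcal{N}_{C/Y}^{\vee}\otimes\omega_C)=h^0(\mathcal{N}_{C/Y})=0 .
\]

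With $\mathcal{N}_{C/Y}$ acyclic, I would finish by passing to the long exact cohomology sequence of the displayed short exact sequence. Since both $H^0(\mathcal{N}_{C/Y})$ and $H^1(\mathcal{N}_{C/Y})$ vanish, the connecting maps force isomorphisms $H^i(\mathcal{N}_{C/\mathbb{P}^4})\cong H^i(\mathcal{O}_C(5))$ for $i=0,1$ (using that $H^2$ of any coherent sheaf on the curve $C$ is zero), which is exactly the asserted equality of dimensions. The only subtlety worth flagging is the rank-$2$ self-duality step: it is precisely what converts the Calabi-Yau condition $\det\mathcal{N}_{C/Y}\cong\omega_C$ into the symmetry $h^1(\mathcal{N}_{C/Y})=h^0(\mathcal{N}_{C/Y})$, and everything else is bookkeeping with the exact sequence.
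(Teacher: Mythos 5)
Your proof is correct and follows essentially the same route as the paper: the paper also uses the sequence $0\to\mathcal{N}_{C/Y}\to\mathcal{N}_{C/\mathbb{P}^4}\to\mathcal{O}_C(5)\to 0$ together with the vanishing $h^0(\mathcal{N}_{C/Y})=h^1(\mathcal{N}_{C/Y})=0$. The only difference is that the paper simply asserts this vanishing as a consequence of isolatedness, whereas you supply the justification---adjunction with $\omega_Y\cong\mathcal{O}_Y$, the rank-$2$ self-duality $\mathcal{N}_{C/Y}^{\vee}\otimes\omega_C\cong\mathcal{N}_{C/Y}$, and Serre duality---which is exactly the implicit Calabi--Yau argument the paper relies on.
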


\begin{proof}
Suppose $C$ is isolated in $Y$, then $h^0(\mathcal{N}_{C/Y})=h^1(\mathcal{N}_{C/Y})=0$. Considering  $$0\rightarrow \mathcal{N}_{C/Y}\rightarrow \mathcal{N}_{C/\mathbb{P}^4}\rightarrow \mathcal{O}_C(5)\rightarrow 0$$ Taking cohomology groups, it's easy to see $H^i(\mathcal{N}_{C/\mathbb{P}^4})\cong H^i(\mathcal{O}_C(5)),\; i=0,1$
\end{proof}
 
\begin{remark}
Lemma 2.1 gives us a useful necessary condition for curves to be isolated in smooth quintic threefolds. Actually, as we will see later, it turns out to be a rather strong condition.
\end{remark}

\begin{lemma}
$C\subset \mathbb{P}^n$ a curve. Suppose $C$ is degenerate, i.e., $C$ is contained in a hyperplane. Then $5h^1(\mathcal{O}_C(1))\geq h^1(\mathcal{N}_{C/{\mathbb{P}^n}})\geq h^1(\mathcal{O}_C(1)).$ In particular, $h^1(\mathcal{N}_{C/{\mathbb{P}^n}})=0$ if and only if $h^1(\mathcal{O}_C(1))=0$.
\end{lemma}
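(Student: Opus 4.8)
The plan is to sandwich $h^1(\mathcal{N}_{C/\mathbb{P}^n})$ between $h^1(\mathcal{O}_C(1))$ and a fixed multiple of it by playing off two short exact sequences, and then to read the ``if and only if'' clause off the resulting squeeze. Throughout I would use the fact that $C$ is one-dimensional, so $H^2(C,\mathcal{F})=0$ for every coherent sheaf $\mathcal{F}$ on $C$; this sends every relevant connecting map into a zero group, so the long exact cohomology sequences collapse to surjections and injections on $H^1$.

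For the lower bound I would exploit degeneracy directly. Since $C$ lies in a hyperplane, choose $H\cong\mathbb{P}^{n-1}$ with $C\subset H\subset\mathbb{P}^n$ and write the normal bundle sequence of the flag,
$$0\rightarrow \mathcal{N}_{C/H}\rightarrow \mathcal{N}_{C/\mathbb{P}^n}\rightarrow \mathcal{N}_{H/\mathbb{P}^n}|_C\rightarrow 0.$$
Because $H$ is a hyperplane, $\mathcal{N}_{H/\mathbb{P}^n}=\mathcal{O}_H(1)$, so the last term is $\mathcal{O}_C(1)$. Passing to cohomology and using $H^2(\mathcal{N}_{C/H})=0$, the map $H^1(\mathcal{N}_{C/\mathbb{P}^n})\rightarrow H^1(\mathcal{O}_C(1))$ is surjective, which yields $h^1(\mathcal{N}_{C/\mathbb{P}^n})\geq h^1(\mathcal{O}_C(1))$. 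This is the only place where the degeneracy hypothesis is needed.

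For the upper bound I would dominate the normal bundle by the restricted tangent bundle of the ambient space. Restricting the Euler sequence to $C$ gives
$$0\rightarrow \mathcal{O}_C\rightarrow \mathcal{O}_C(1)^{\oplus (n+1)}\rightarrow \mathcal{T}_{\mathbb{P}^n}|_C\rightarrow 0,$$
so, again using $H^2(\mathcal{O}_C)=0$, one gets $h^1(\mathcal{T}_{\mathbb{P}^n}|_C)\leq (n+1)\,h^1(\mathcal{O}_C(1))$. The normal sequence $0\rightarrow \mathcal{T}_C\rightarrow \mathcal{T}_{\mathbb{P}^n}|_C\rightarrow \mathcal{N}_{C/\mathbb{P}^n}\rightarrow 0$ then gives $h^1(\mathcal{N}_{C/\mathbb{P}^n})\leq h^1(\mathcal{T}_{\mathbb{P}^n}|_C)$. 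The Euler sequence thus produces the factor $n+1$, which is exactly $5$ for the ambient $\mathbb{P}^4$ in which all curves of this paper live, giving $h^1(\mathcal{N}_{C/\mathbb{P}^n})\leq 5\,h^1(\mathcal{O}_C(1))$; note this half needs no degeneracy.

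Combining the two inequalities gives the stated sandwich, and the ``in particular'' clause is then immediate: if $h^1(\mathcal{O}_C(1))=0$ the upper bound forces $h^1(\mathcal{N}_{C/\mathbb{P}^n})=0$, while if $h^1(\mathcal{N}_{C/\mathbb{P}^n})=0$ the lower bound forces $h^1(\mathcal{O}_C(1))=0$. I do not expect a serious obstacle: the only steps requiring care are the identification $\mathcal{N}_{H/\mathbb{P}^n}|_C=\mathcal{O}_C(1)$ and the systematic use of $H^2(C,-)=0$ to upgrade the long exact sequences to the needed surjection and injection. The one conceptual point worth flagging is that the two ends of the estimate come from genuinely different sequences—the flag sequence (using degeneracy) for the lower bound, and the Euler plus normal sequences (for any curve) for the upper bound.
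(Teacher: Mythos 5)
Your proof is correct and follows essentially the same route as the paper: the hyperplane flag sequence $0\to\mathcal{N}_{C/H}\to\mathcal{N}_{C/\mathbb{P}^n}\to\mathcal{O}_C(1)\to 0$ for the lower bound, and the restricted Euler sequence combined with the tangent--normal sequence for the upper bound. Your observation that the Euler sequence really yields the factor $n+1$ (which equals $5$ precisely when $n=4$) is in fact slightly more careful than the paper, whose stated constant $5$ tacitly assumes the ambient space is $\mathbb{P}^4$.
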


\begin{proof}
Notice that we have the following two exact sequences: $$0\rightarrow \mathcal{O}_C\rightarrow \mathcal{O}_C(1)^{(n+1)}\rightarrow \mathcal{T}_{\mathbb{P}^n}|_C\rightarrow 0, $$  $$0\rightarrow \mathcal{T}_C\rightarrow \mathcal{T}_{\mathbb{P}^n}|_C\rightarrow \mathcal{N}_{C/\mathbb{P}^n}\rightarrow 0$$
Then clearly, $5h^1(\mathcal{O}_C(1))\geq h^1(\mathcal{N}_{C/\mathbb{P}^n})$.

On the other hand, we have the following exact sequence: $$0\rightarrow \mathcal{N}_{C/\mathbb{P}^{n-1}}\rightarrow \mathcal{N}_{C/\mathbb{P}^n}\rightarrow \mathcal{O}_C(1)\rightarrow 0$$ 

Obviously, $h^1(\mathcal{N}_{C/\mathbb{P}^n})\geq h^1(\mathcal{O}_C(1))$.

\end{proof}

For later purpose, we need to generalize Lemma 2.3 to get a lower bound for $h^1(\mathcal{N}_{C/{\mathbb{P}^n}})$. To this end , we need to show the following lemma:

\begin{lemma}
$X\subset \mathbb{P}^n$  reduced and irreducible variety. Let $d$ be the smallest integer such that $h^0(\mathcal{I}_X(d))\neq 0$, where $\mathcal{I}_X$ is the ideal sheaf of $X$. Then $\forall F\in H^0(\mathbb{P}^n, \mathcal{I}_X(d))$, $F$ is an irreducible homogeneous polynomial and the singular locus of $V(F)$ doesn't contain $X$, where $V(F)$ is the variety defined by $F$. 
\end{lemma}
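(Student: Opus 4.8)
The plan is to exploit the minimality of $d$ twice, using a single underlying principle: any nonzero homogeneous polynomial vanishing on $X$ has degree at least $d$, so a homogeneous polynomial of degree strictly less than $d$ that vanishes on $X$ must be identically zero. Both assertions of the lemma will be extracted from this principle together with the irreducibility of $X$. Note first that $\mathcal{I}_X \neq 0$ forces $X \subsetneq \mathbb{P}^n$, hence $d \geq 1$, a fact I will need at the end.

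For irreducibility of $F$, I would argue by contradiction. Fix $F \in H^0(\mathbb{P}^n, \mathcal{I}_X(d))$ and suppose it factors as $F = GH$ with $G, H$ homogeneous of positive degree, so that $\deg G, \deg H < d$. Since $V(F) = V(G) \cup V(H)$ contains $X$ and $X$ is irreducible, $X$ lies in $V(G)$ or in $V(H)$, say $X \subset V(G)$. Then $G$ is a nonzero homogeneous polynomial of degree $\deg G < d$ vanishing on $X$, i.e. $G \in H^0(\mathbb{P}^n, \mathcal{I}_X(\deg G))$, contradicting the minimality of $d$. Hence $F$ admits no such factorization and is irreducible; in particular $V(F)$ is an integral hypersurface.

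For the claim about the singular locus I would invoke the Jacobian criterion. Because $F$ is irreducible, $V(F)$ is reduced, so a point $p \in V(F)$ is singular precisely when all partial derivatives $\partial F / \partial x_0, \ldots, \partial F / \partial x_n$ vanish at $p$. Suppose, for contradiction, that $X \subset \mathrm{Sing}(V(F))$. Then each $\partial F / \partial x_i$ vanishes on $X$; since it is homogeneous of degree $d - 1 < d$, minimality of $d$ forces $\partial F / \partial x_i = 0$ for every $i$. Euler's identity $\sum_{i=0}^n x_i \, \partial F / \partial x_i = d \cdot F$, valid in characteristic $0$, then yields $d \cdot F = 0$, and since $d \geq 1$ we conclude $F = 0$, contradicting $F \neq 0$. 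Therefore $X \not\subset \mathrm{Sing}(V(F))$.

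The argument is short, and the only point demanding care is the logical ordering: the description of $\mathrm{Sing}(V(F))$ as the common zero locus of the partials relies on $V(F)$ being reduced, which is exactly what irreducibility of $F$ supplies, so the first part must genuinely precede the second. The remaining ingredients to keep track of are the use of Euler's relation, hence the characteristic-$0$ hypothesis, and the bound $d \geq 1$ coming from $X$ being a proper subvariety. I expect no substantive obstacle beyond bookkeeping these hypotheses.
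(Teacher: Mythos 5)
Your proposal is correct and follows essentially the same route as the paper: irreducibility of $F$ via minimality of $d$ (using irreducibility of $X$ to pick out one factor), and the singular-locus claim via the partial derivatives having degree $d-1 < d$. You merely make explicit two points the paper leaves implicit—the reducedness needed for the Jacobian criterion and the Euler-identity step ruling out all partials vanishing identically—which is fine and arguably an improvement in rigor.
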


\begin{proof}
If $F$ is not irreducible, then $X$ must be contained in a hypersurface of degree less than $d$, but that is impossible by the definition of $d$. Similarly, the singular locus of $V(F)$ is defined by polynomails of degree $d-1$(more explicitly, partial derivatives of $F$), so $X$ is not contained in the singular locus of $V(F)$.
 
\end{proof}

The following lemma is critical to the rest of this paper because it gives a nice lower bound for $h^1(\mathcal{N}_{C/{\mathbb{P}^n}})$.

\begin{lemma}
$C\subset \mathbb{P}^n$ a curve. Let $m$ be the smallest integer such that $h^0(\mathcal{I}_C(m))\neq 0$. Then $h^1(\mathcal{N}_{C/{\mathbb{P}^n}})\geq h^1(\mathcal{O}_C(m))$.
\end{lemma}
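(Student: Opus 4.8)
The plan is to mimic the proof of Lemma 2.3, where the essential ingredient was a surjection $\mathcal{N}_{C/\mathbb{P}^n} \to \mathcal{O}_C(1)$ coming from the linear form cutting out the hyperplane; here I would replace the hyperplane by a hypersurface of least degree through $C$. Fix a nonzero section $F \in H^0(\mathbb{P}^n, \mathcal{I}_C(m))$, which exists by the definition of $m$. Reducing $F$ modulo $\mathcal{I}_C^2$ and recalling that $\mathcal{I}_C/\mathcal{I}_C^2 \cong \mathcal{N}^{\vee}_{C/\mathbb{P}^n}$ is the conormal bundle, the image $\bar F$ of $F$ under $\mathcal{I}_C(m) \to (\mathcal{I}_C/\mathcal{I}_C^2)(m)$ is a global section of $\mathcal{N}^{\vee}_{C/\mathbb{P}^n} \otimes \mathcal{O}_C(m) = \mathcal{H}om(\mathcal{N}_{C/\mathbb{P}^n}, \mathcal{O}_C(m))$. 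In other words, $\bar F$ is precisely a homomorphism
$$\phi : \mathcal{N}_{C/\mathbb{P}^n} \longrightarrow \mathcal{O}_C(m),$$
the ``normal derivative'' of $F$ along $C$, which reduces to the map used in Lemma 2.3 when $m=1$.

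The crucial step---and the point I expect to be the main obstacle---is to show that $\phi$ is nonzero, equivalently generically surjective. This is exactly where Lemma 2.4 enters. Interpreting $\bar F$ as the restriction of the differential $dF$ to the normal directions of $C$, the map $\phi$ vanishes at a point $p \in C$ precisely when $dF_p = 0$, i.e.\ when $p$ is a singular point of $V(F)$; hence $\phi$ is identically zero if and only if $C \subseteq \mathrm{Sing}(V(F))$. Since Lemma 2.4 guarantees that the singular locus of $V(F)$ does not contain $C$, the map $\phi$ is nonzero, and the subset $Z := C \cap \mathrm{Sing}(V(F))$ where $\phi$ degenerates is a proper closed, hence finite, subscheme of $C$. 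As $C$ is irreducible and $\mathcal{O}_C(m)$ is a line bundle, the image of $\phi$ is a nonzero subsheaf of $\mathcal{O}_C(m)$ of the form $\mathcal{O}_C(m)\otimes \mathcal{I}_Z$, so that $\phi$ factors as a surjection $\mathcal{N}_{C/\mathbb{P}^n} \twoheadrightarrow \mathcal{O}_C(m)\otimes \mathcal{I}_Z$.

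Finally I would run two elementary cohomology comparisons on the smooth curve $C$. From the surjection $\mathcal{N}_{C/\mathbb{P}^n} \twoheadrightarrow \mathcal{O}_C(m)\otimes \mathcal{I}_Z$ and the vanishing of $H^2$ on a curve, the long exact sequence gives $h^1(\mathcal{N}_{C/\mathbb{P}^n}) \geq h^1(\mathcal{O}_C(m)\otimes \mathcal{I}_Z)$. From the short exact sequence
$$0 \rightarrow \mathcal{O}_C(m)\otimes \mathcal{I}_Z \rightarrow \mathcal{O}_C(m) \rightarrow \mathcal{O}_Z \rightarrow 0,$$
in which $\mathcal{O}_Z$ is a torsion sheaf with zero-dimensional support so that $H^1(\mathcal{O}_Z)=0$, I get $h^1(\mathcal{O}_C(m)\otimes \mathcal{I}_Z) \geq h^1(\mathcal{O}_C(m))$. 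Chaining the two inequalities yields $h^1(\mathcal{N}_{C/\mathbb{P}^n}) \geq h^1(\mathcal{O}_C(m))$, as desired. The only genuine content beyond formal homological algebra is the nonvanishing of $\phi$, for which Lemma 2.4 is tailor-made.
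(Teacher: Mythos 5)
Your proof is correct, and it is essentially the paper's argument run in dual form, so the comparison is worth spelling out. The paper restricts the sequence $0 \to \mathcal{I}_{Y/\mathbb{P}^n} \to \mathcal{I}_{C/\mathbb{P}^n} \to \mathcal{I}_{C/Y} \to 0$ (with $Y = V(F)$) to $C$, obtaining an injection $\mathcal{O}_C(-m) \cong \mathcal{I}_{Y/\mathbb{P}^n}/\mathcal{I}_{Y/\mathbb{P}^n}\mathcal{I}_{C/\mathbb{P}^n} \hookrightarrow \mathcal{I}_{C/\mathbb{P}^n}/\mathcal{I}_{C/\mathbb{P}^n}^2$, where injectivity is exactly your nonvanishing claim and is proved by the same appeal to Lemma 2.4 (generic injectivity where $Y$ is smooth, then everywhere by local freeness); it then applies $\mathcal{H}om_{\mathcal{O}_C}(\_\_,\mathcal{O}_C)$ to get the four-term sequence $0 \to \mathcal{N}_{C/Y} \to \mathcal{N}_{C/\mathbb{P}^n} \to \mathcal{O}_C(m) \to \mathcal{E}xt^1_{\mathcal{O}_C}(\mathcal{I}_{C/Y}/\mathcal{I}_{C/Y}^2,\mathcal{O}_C) \to 0$ and concludes because the $\mathcal{E}xt^1$ term is torsion, hence has vanishing $H^1$. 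Your $\phi\colon \mathcal{N}_{C/\mathbb{P}^n} \to \mathcal{O}_C(m)$ is precisely the middle map of that sequence (the dual of the conormal injection), your $\mathcal{O}_C(m)\otimes\mathcal{I}_Z$ is its image, and your two cohomology comparisons reproduce the same torsion argument. What your packaging buys: you never dualize a short exact sequence of non-locally-free sheaves and never need $\mathcal{E}xt$ sheaves or the normal sheaf $\mathcal{N}_{C/Y}$ of $C$ in a singular hypersurface; everything stays at the level of line bundles on a smooth curve, with the pointwise identification of $\bar F$ as the normal derivative making the role of Lemma 2.4 transparent. What the paper's packaging buys: it exhibits the kernel explicitly as $\mathcal{N}_{C/Y}$, so the four-term sequence visibly generalizes the normal bundle sequence used in Lemma 2.1 (where $Y$ is a smooth quintic), which is the conceptual thread of the section. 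One small point of hygiene in your write-up: $Z$ should be defined scheme-theoretically as the vanishing scheme of the section $\bar F$ of $\mathcal{N}^{\vee}_{C/\mathbb{P}^n}(m)$ (whose support is $C \cap \mathrm{Sing}(V(F))$), rather than as that set-theoretic intersection, but this does not affect the argument.
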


\begin{proof}
Let $F\in H^0(\mathbb{P}^n, \mathcal{I}_C(m))$, and $Y:=V(F)$. Considering the following exact sequence of ideal sheaves : $$0\longrightarrow \mathcal{I}_{Y/{\mathbb{P}^n}}\longrightarrow \mathcal{I}_{C/{\mathbb{P}^n}}\longrightarrow \mathcal{I}_{C/{Y}}\longrightarrow 0$$

Restricting the above exact sequence to $C$ (i.e. tensoring $\mathcal{I}_{C/{\mathbb{P}^n}}$) $$0\longrightarrow \frac{\mathcal{I}_{Y/{\mathbb{P}^n}}}{\mathcal{I}_{Y/{\mathbb{P}^n}}\mathcal{I}_{C/{\mathbb{P}^n}}}\stackrel{\phi}{\longrightarrow} \frac{\mathcal{I}_{C/{\mathbb{P}^n}}}{\mathcal{I}_{C/{\mathbb{P}^n}}^2}\longrightarrow \frac{\mathcal{I}_{C/{Y}}}{\mathcal{I}_{C/{Y}}^2}\longrightarrow 0$$

Notice that $\phi$ is injective because of Lemma 2.4. Actually, $\phi$ is obviously injective at the points where $Y$ is smooth, so $\phi$ is injective generically by Lemma 2.4. Then $\phi$ is injective everywhere because $\frac{\mathcal{I}_{Y/{\mathbb{P}^n}}}{\mathcal{I}_{Y/{\mathbb{P}^n}}\mathcal{I}_{C/{\mathbb{P}^n}}}$ is locally free. 

Apply $\mathcal{H}om_{\mathcal{O}_C}(\_\_, \mathcal{O}_C)$ to the above exact sequence: $$0\rightarrow \mathcal{N}_{C/Y}\rightarrow \mathcal{N}_{C/\mathbb{P}^n}\rightarrow \mathcal{N}_{Y/\mathbb{P}^n}|_C\rightarrow \mathcal{E}xt_{\mathcal{O}_C}^1(\frac{\mathcal{I}_{C/{Y}}}{\mathcal{I}_{C/{Y}}^2}, \mathcal{O}_C)\rightarrow 0$$

$ \mathcal{E}xt_{\mathcal{O}_C}^1(\frac{\mathcal{I}_{C/{Y}}}{\mathcal{I}_{C/{Y}}^2}, \mathcal{O}_C)$ is a torsion sheaf and hence $H^1(C, \mathcal{E}xt_{\mathcal{O}_C}^1(\frac{\mathcal{I}_{C/{Y}}}{\mathcal{I}_{C/{Y}}^2}, \mathcal{O}_C))=0$. Then it is easy to see $h^1(\mathcal{N}_{C/{\mathbb{P}^n}})\geq h^1(\mathcal{N}_{Y/{\mathbb{P}^n}}|_C)=h^1(\mathcal{O}_C(m))$.

\end{proof}

\begin{corollary}
$C\subset \mathbb{P}^n$ a curve. Suppose $C$ is contained in a hypersurface of degree $d$, then $h^1(\mathcal{N}_{C/{\mathbb{P}^n}})\geq h^1(\mathcal{O}_C(d)).$
\end{corollary}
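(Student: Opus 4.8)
The plan is to deduce this directly from Lemma 2.5, with one small gap to bridge. Let $m$ be the smallest integer with $h^0(\mathcal{I}_C(m))\neq 0$. Since $C$ is assumed to lie on a hypersurface of degree $d$, we have $h^0(\mathcal{I}_C(d))\neq 0$, and hence $m\leq d$ by minimality of $m$. Lemma 2.5 then gives $h^1(\mathcal{N}_{C/\mathbb{P}^n})\geq h^1(\mathcal{O}_C(m))$. So the only thing left to prove is the inequality $h^1(\mathcal{O}_C(m))\geq h^1(\mathcal{O}_C(d))$, i.e. that $h^1(\mathcal{O}_C(k))$ does not increase as $k$ grows from $m$ to $d$.

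To establish this monotonicity I would fix a section $s\in H^0(\mathcal{O}_C(1))$ with finite zero locus $Z$ (possible since $\mathcal{O}_C(1)$ is the restriction of the ample generator and $\deg\mathcal{O}_C(1)=\deg C>0$) and use, for each $k$, the short exact sequence obtained by multiplication by $s$:
$$0\longrightarrow \mathcal{O}_C(k)\stackrel{\cdot s}{\longrightarrow}\mathcal{O}_C(k+1)\longrightarrow \mathcal{O}_Z(k+1)\longrightarrow 0.$$
Since $Z$ is a finite set of points, $\mathcal{O}_Z(k+1)$ is supported in dimension $0$, so $H^1(C,\mathcal{O}_Z(k+1))=0$. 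Taking the long exact cohomology sequence, the map $H^1(\mathcal{O}_C(k))\to H^1(\mathcal{O}_C(k+1))$ is therefore surjective, whence $h^1(\mathcal{O}_C(k))\geq h^1(\mathcal{O}_C(k+1))$. Iterating this from $k=m$ up to $k=d-1$ yields $h^1(\mathcal{O}_C(m))\geq h^1(\mathcal{O}_C(d))$, and combining with Lemma 2.5 finishes the proof. (Alternatively, one can read off the same monotonicity from Serre duality, $h^1(\mathcal{O}_C(k))=h^0(\omega_C(-k))$, since raising $k$ lowers the degree of $\omega_C(-k)$.)

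There is no genuine obstacle here: all the real content sits in Lemma 2.5. The only point that needs care—and the reason this is stated separately as a corollary rather than folded into Lemma 2.5—is that the hypothesis supplies merely \emph{some} hypersurface of degree $d$ containing $C$, whereas Lemma 2.5 is phrased in terms of the \emph{minimal} such degree $m$. Bridging $m\leq d$ back up to $d$ is exactly the elementary monotonicity argument above, so the corollary follows cleanly.
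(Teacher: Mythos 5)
Your proof is correct and follows exactly the route the paper intends: the paper states this corollary without proof as an immediate consequence of Lemma 2.5, the implicit argument being $m\leq d$ together with the monotonicity $h^1(\mathcal{O}_C(m))\geq h^1(\mathcal{O}_C(d))$. Your explicit verification of that monotonicity (via multiplication by a hyperplane section with finite zero locus, or equivalently by Serre duality and twisting down by an effective divisor) simply fills in the step the paper leaves to the reader.
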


The following theorem explains why $h^1(\mathcal{N}_{C/\mathbb{P}^4})=h^1(\mathcal{O}_C(5))$ is a rather strong condition for a curve $C\subset \mathbb{P}^4$ and essentially, it is one of the main ingredients of the proof of the non-existence results, namely, Theorem 3.5.

\begin{theorem}
$C\subset \mathbb{P}^4$ a curve. Suppose $C$ is contained in a hypersuface of degree $d\leq 4$. Then $h^1(\mathcal{N}_{C/\mathbb{P}^4})=h^1(\mathcal{O}_C(5))$ if and only if $h^1(\mathcal{N}_{C/\mathbb{P}^4})=h^1(\mathcal{O}_C(5))=h^1(\mathcal{O}_C(d))=0$.
\end{theorem}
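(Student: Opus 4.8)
The plan is to prove the nontrivial ("only if") direction; the reverse implication is immediate, since if all three cohomology groups vanish then in particular $h^1(\mathcal{N}_{C/\mathbb{P}^4})=h^1(\mathcal{O}_C(5))=0$. So assume from now on that $h^1(\mathcal{N}_{C/\mathbb{P}^4})=h^1(\mathcal{O}_C(5))$.

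First I would assemble two monotonicity facts into a single chain of inequalities. On one end, Corollary 2.6 applies because $C$ lies on a hypersurface of degree $d$, giving $h^1(\mathcal{N}_{C/\mathbb{P}^4})\geq h^1(\mathcal{O}_C(d))$. On the other end, the key observation is that $k\mapsto h^1(\mathcal{O}_C(k))$ is non-increasing. To see this, note that $\mathcal{O}_C(1)$, being the restriction of the globally generated $\mathcal{O}_{\mathbb{P}^4}(1)$, is globally generated of positive degree; choosing a section with finite zero locus $Z$ gives, for each $k$, an exact sequence
$$0\rightarrow \mathcal{O}_C(k)\rightarrow \mathcal{O}_C(k+1)\rightarrow \mathcal{O}_Z(k+1)\rightarrow 0.$$
Since $Z$ is $0$-dimensional, $H^1(\mathcal{O}_Z(k+1))=0$, so $H^1(\mathcal{O}_C(k))\twoheadrightarrow H^1(\mathcal{O}_C(k+1))$ and hence $h^1(\mathcal{O}_C(k))\geq h^1(\mathcal{O}_C(k+1))$. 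As $d\leq 4$, these combine into
$$h^1(\mathcal{N}_{C/\mathbb{P}^4})\geq h^1(\mathcal{O}_C(d))\geq h^1(\mathcal{O}_C(d+1))\geq\cdots\geq h^1(\mathcal{O}_C(5)).$$

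Under the hypothesis the two extremes coincide, so every inequality in the chain is an equality. In particular $h^1(\mathcal{O}_C(d))=h^1(\mathcal{O}_C(d+1))$ (here I use $d+1\leq 5$). The crux of the proof is to show that this single consecutive equality forces the common value to be $0$; once this is done, the whole chain collapses to $0$ and all three asserted vanishings follow at once. By Serre duality, writing $A=\mathcal{O}_C(1)$ and $M=\omega_C\otimes A^{-d}$, the equality reads $h^0(M)=h^0(M\otimes A^{-1})$, so it suffices to prove the following: \emph{if $A$ is a globally generated line bundle of positive degree on the integral curve $C$ and $h^0(M)=h^0(M\otimes A^{-1})$, then $h^0(M)=0$.}

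For this last statement I would argue by contradiction. Multiplication by any section $s\in H^0(A)$ with (finite, nonempty) zero divisor $Z_s$ gives an injection $H^0(M\otimes A^{-1})\hookrightarrow H^0(M)$ whose image is exactly the space of sections of $M$ divisible by $s$, i.e.\ vanishing on $Z_s$; the dimension equality together with injectivity forces this map to be an isomorphism, so every global section of $M$ vanishes on $Z_s$, and this holds for every such $s$. If $h^0(M)>0$, pick a nonzero $t\in H^0(M)$ with finite zero set. Since $|A|$ is base-point-free, a general member $Z_s$ is a nonempty effective divisor disjoint from the zeros of $t$, so $t$ cannot vanish on $Z_s$ --- a contradiction. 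Hence $h^0(M)=0$, i.e.\ $h^1(\mathcal{O}_C(d))=0$, completing the argument. I expect this base-point-free/general-position step to be the only real obstacle; the monotonicity and the chaining of inequalities are routine, and the content of the theorem is precisely that the numerical coincidence $h^1(\mathcal{N}_{C/\mathbb{P}^4})=h^1(\mathcal{O}_C(5))$ can occur only in the trivial vanishing regime.
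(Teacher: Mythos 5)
Your proposal is correct, and while its skeleton matches the paper's (both start from Corollary 2.6 and both reduce, via Serre duality, to showing that $h^1(\mathcal{O}_C(5))\geq h^1(\mathcal{O}_C(d))$ forces vanishing), the decisive step is carried out by a genuinely different argument. The paper compares the twists $d$ and $5$ in one shot: writing $h^1(\mathcal{O}_C(d))=h^0(\mathcal{K}_C(-5)\otimes\mathcal{O}_C(5-d))$, it quotes Hartshorne's lemma on sums of linear series [2, Ch.\ IV, Lemma 5.5], namely $h^0(D)+h^0(E)\leq h^0(D+E)+1$ for classes with sections, applied to $D=\mathcal{K}_C(-5)$ and $E=\mathcal{O}_C(5-d)$; since $h^0(\mathcal{O}_C(5-d))\geq 2$, this yields the strict drop $h^1(\mathcal{O}_C(5))+1\leq h^1(\mathcal{O}_C(d))$ unless $h^0(\mathcal{K}_C(-5))=0$, giving the desired contradiction. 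You instead interpolate the full chain $h^1(\mathcal{O}_C(d))\geq h^1(\mathcal{O}_C(d+1))\geq\cdots\geq h^1(\mathcal{O}_C(5))$ using the elementary sequences $0\to\mathcal{O}_C(k)\to\mathcal{O}_C(k+1)\to\mathcal{O}_Z(k+1)\to 0$, and then prove from scratch that a single consecutive equality collapses everything to zero, via multiplication by a section of $\mathcal{O}_C(1)$ together with a general-position choice of hyperplane section avoiding the zeros of a fixed section of $\mathcal{K}_C(-d)$. Your strictness step is in effect a self-contained proof of exactly the special case of Hartshorne's lemma needed here (one of the two classes base-point-free of positive degree), so your route buys independence from the citation, plus the reusable fact that $k\mapsto h^1(\mathcal{O}_C(k))$ is non-increasing, at the cost of being longer than the paper's one-line appeal to the lemma. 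Two minor remarks: your characterization of the image of the multiplication map as the sections divisible by $s$ tacitly uses smoothness of $C$, which is the paper's standing convention, so there is no gap; and in fact you only need surjectivity (injectivity plus equal dimensions) to conclude that every section of $M$ vanishes on $Z_s$, so the argument is airtight as written.
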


\begin{proof}
The ``if'' part is trivial, so we just need to show the `` only if '' part. Suppose we have $h^1(\mathcal{N}_{C/\mathbb{P}^4})=h^1(\mathcal{O}_C(5))$.  By Corollary 2.6, $h^1(\mathcal{N}_{C/\mathbb{P}^4})\geq h^1(\mathcal{O}_C(d))$ and hence $h^1(\mathcal{O}_C(5))\geq h^1(\mathcal{O}_C(d)).$ 

But $h^1(\mathcal{O}_C(d))=h^0(\mathcal{K}_C(-d))=h^0(\mathcal{K}_C(-5)\otimes\mathcal{O}_C(5-d))$, where $\mathcal{K}_C$ is the canonical bundle of $C$. If $h^0(\mathcal{K}_C(-5))=0$, we are done because $0=h^0(\mathcal{K}_C(-5))=h^1(\mathcal{O}_C(5))$. If $h^0(\mathcal{K}_C(-5))\neq 0$, by [2, Ch IV, Lemma 5.5] $h^0(\mathcal{K}_C(-5))+h^0(\mathcal{O}_C(5-d))\leq h^0(\mathcal{K}_C(-d))+1$. But $5-d\geq 1$, so $h^0(\mathcal{O}_C(5-d))\geq 2$, and hence $h^0(\mathcal{K}_C(-5))+1\leq h^0(\mathcal{K}_C(-d))$. Thus, by Serre duality we have $h^1(\mathcal{O}_C(5))+1\leq h^1(\mathcal{O}_C(d))$, contradicting $h^1(\mathcal{O}_C(5))\geq h^1(\mathcal{O}_C(d))$.
\end{proof}

\begin{remark}
Theorem 2.7 tells us that if a curve $C\subset \mathbb{P}^4$ is isolated in a smooth quintic threefold and $C$ is contained in some hypersurface of degree $\leq 4$, then $C$ is even unobstructed as a curve in $\mathbb{P}^4$ (more precisely, $h^1(\mathcal{N}_{C/{\mathbb{P}^4}})=0$) and hence $[C]\in Hilb(\mathbb{P}^4)$ is a smooth point (Cf.[4, Ch.I, $\S$ 1.2] ).
\end{remark}

Let $C\subset \mathbb{P}^n$ be a curve of degree $d$ and genus $g$. Let $5>k> 0$ be an integer. By Riemann-Roch, $h^1(\mathcal{O}_C(k))=h^0(\mathcal{O}_C(k))-kd-1+g$, this means, roughly speaking, if $g$ is ``very big'' with respect to $d$ (for example, $g>kd+1$), then $h^1(\mathcal{O}_C(k))$ will be positive. Furthermore, if we hope $C$ to satisfy $h^1(\mathcal{N}_{C/\mathbb{P}^4})=h^1(\mathcal{O}_C(5))$, then by Theorem 2.7 $C$ can not be contained in a hypersurface of degree $\leq k$. More precisely, we have the following:

\begin{theorem}
$C\subset\mathbb{P}^4$ a curve. $C$ is not contained in any plane (i.e. two dimensional linear subspace of $\mathbb{P}^4$) and has degree $d$ and genus $g$. Suppose $h^1(\mathcal{N}_{C/{\mathbb{P}^4}})=h^1(\mathcal{O}_C(5))$. Then: \\ (i) If $g>d-3$ and  $d\geq 3$, then $C$ is non-degenerate, i.e. $H^0(\mathbb{P}^4, \mathcal{I}_C(1))=0$. \\ (ii) If $g>2d-11$ and $d\geq 8$, then $C$ is not contained in any quadric hypersurfaces;\\ (iii) If $g>3d-18$ and $d\geq 8$, then $C$ is not contained in any cubic hypersurfaces.
\end{theorem}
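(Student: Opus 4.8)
The plan is to prove all three parts by contradiction, using Theorem 2.7 to convert ``$C$ lies on a low-degree hypersurface'' into a cohomology vanishing, and then playing that vanishing off against Riemann--Roch and a lower bound on sections coming from the embedding. Throughout, assume $h^1(\mathcal{N}_{C/\mathbb{P}^4})=h^1(\mathcal{O}_C(5))$. Suppose $C$ is contained in a hypersurface of degree $k\leq 4$; then Theorem 2.7 forces $h^1(\mathcal{O}_C(k))=0$, so Riemann--Roch gives the exact equality $h^0(\mathcal{O}_C(k))=kd+1-g$. The whole game is then to produce a lower bound $h^0(\mathcal{O}_C(k))\geq N_k$ depending only on the geometry of the embedding; this yields $g\leq kd+1-N_k$, contradicting the genus hypothesis once $N_k$ is large enough.

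For (i), take $k=1$ and suppose $C$ is degenerate, i.e.\ $H^0(\mathcal{I}_C(1))\neq 0$. Since $C$ is not contained in a plane, it cannot lie on two independent hyperplanes (else $C\subset\mathbb{P}^2$), so $h^0(\mathcal{I}_C(1))=1$ and $C$ spans exactly a $\mathbb{P}^3$. Hence the restriction map $H^0(\mathbb{P}^4,\mathcal{O}(1))\to H^0(\mathcal{O}_C(1))$ has $4$-dimensional image, giving $h^0(\mathcal{O}_C(1))\geq 4$. Combined with $h^0(\mathcal{O}_C(1))=d+1-g$ this forces $g\leq d-3$, contradicting $g>d-3$.

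For (ii) and (iii) I would first observe that the hypotheses force non-degeneracy: for $d\geq 8$ one has $2d-11\geq d-3$ and $3d-18\geq d-3$, so part (i) applies and $H^0(\mathcal{I}_C(1))=0$. Now I want $N_k$ to be the value predicted by Castelnuovo theory. Let $H$ be a general hyperplane and $\Gamma=C\cap H$, a set of $d$ points; since $C$ is non-degenerate and irreducible, $\Gamma$ lies in linearly general position in $H\cong\mathbb{P}^3$, so its Hilbert function satisfies $h_\Gamma(j)\geq\min(d,3j+1)$. Because $C$ is irreducible, a general linear form is a non-zero-divisor on the homogeneous coordinate ring of $C$, whence $h_C(k):=\dim\mathrm{Im}\big(H^0(\mathcal{O}_{\mathbb{P}^4}(k))\to H^0(\mathcal{O}_C(k))\big)\geq\sum_{j=0}^{k}h_\Gamma(j)$, and of course $h^0(\mathcal{O}_C(k))\geq h_C(k)$. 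For $k=2$ this gives $h^0(\mathcal{O}_C(2))\geq 1+4+7=12$, so $g\leq 2d-11$, contradicting (ii); for $k=3$ it gives $h^0(\mathcal{O}_C(3))\geq 1+4+7+\min(d,10)\geq 20$ for $d\geq 8$, so $g\leq 3d-19$, contradicting (iii).

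The routine parts are the Riemann--Roch bookkeeping and the numerology of the three genus inequalities. The real content, and the step I expect to be the crux, is the Castelnuovo-type lower bound on $h^0(\mathcal{O}_C(k))$: one must know that the general hyperplane section of a non-degenerate irreducible curve sits in linearly general position (the general/uniform position lemma) and that such point sets in $\mathbb{P}^3$ have Hilbert function at least $\min(d,3j+1)$, together with the non-zero-divisor argument that converts the point-count into a bound on $h_C(k)$. This is exactly the ``beautiful result in Castelnuovo theory'' the paper invokes, and it is the only non-formal input; everything else reduces to Theorem 2.7 and linear algebra.
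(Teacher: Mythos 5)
Your proposal is correct, and its skeleton --- argue by contradiction, use Theorem 2.7 to convert containment in a degree-$k\leq 4$ hypersurface into $h^1(\mathcal{O}_C(k))=0$, then play Riemann--Roch against a lower bound on $h^0(\mathcal{O}_C(k))$ --- is exactly the paper's; your part (i) coincides with the paper's argument. Where you genuinely diverge is in how the lower bounds on $h^0(\mathcal{O}_C(2))$ and $h^0(\mathcal{O}_C(3))$ are produced. The paper obtains the quadric bound from Park's theorem [7, Corollary 1.5], which gives $h^0(\mathcal{I}_C(2))\leq 15-2(4+1)-2=3$ for a non-degenerate curve in $\mathbb{P}^4$, hence $h^0(\mathcal{O}_C(2))\geq 12$; and it obtains the cubic bound by feeding part (ii) back in (no quadric contains $C$, so $h^0(\mathcal{O}_C(2))\geq 15$, $h^0(\mathcal{O}_C(1))\geq 5$) together with the elementary inequality $\dim|D+E|\geq\dim|D|+\dim|E|$ of [2, Ch.~IV, Lemma 5.5], yielding $h^0(\mathcal{O}_C(3))\geq 19$. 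You instead run the classical Castelnuovo machine: the general position lemma for hyperplane sections of irreducible non-degenerate curves, Castelnuovo's bound $h_\Gamma(j)\geq\min(d,3j+1)$ for points in linearly general position in $\mathbb{P}^3$, and the nonzerodivisor summation $h_C(k)\geq\sum_{j=0}^{k}h_\Gamma(j)$. This is heavier machinery (it needs the monodromy-based general position lemma, valid here since we are in characteristic zero), but it handles (ii) and (iii) uniformly, does not require (ii) as an input to (iii), and in fact yields the slightly stronger bound $h^0(\mathcal{O}_C(3))\geq 20$, so your argument would even allow the hypothesis in (iii) to be relaxed to $g>3d-19$. Both routes are sound: the paper's is shorter given its cited references, while yours is self-contained within standard Castelnuovo theory and is essentially the argument underlying the results the paper quotes as Theorem 3.1.
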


\begin{proof}
(i): Assume $g>d-3$ and $d\geq 3$. Suppose $C$ is degenerate, then $h^0(\mathcal{I}_C(1))=1$ because $C$ is not in any plane. By R-R,  $h^1(\mathcal{O}_C(1))=h^0(\mathcal{O}_C(1))-d-1+g\geq 4-d-1+g=g-d+3>0$. On the other hand, by Theorem 2.7 $h^1(\mathcal{O}_C(1))=0$, contradiction. Therefore, $C$ is non-degenerate.

(ii):Assume $g>2d-11$ and $d\geq 8$. Suppose $C$ is contained in a quadric hypersurface. First of all, when $d\geq 8$, $2d-11\geq d-3$, so by (i) $C$ is non-degenerate. Then by [7, Corollary 1.5], $h^0(\mathcal{I}_C(2))\leq 15-2(4+1)-2$, so $h^0(\mathcal{O}_C(2))\geq 12$. By R-R again, $h^1(\mathcal{O}_C(2))=h^0(\mathcal{O}_C(2))-2d-1+g\geq 12-2d-1+g=g-2d+11>0$, contradiction by Theorem 2.7.

(iii): Assume $g>3d-18$ and $d\geq 8$. Suppose $C$ is contained in a cubic hypersurface. By (ii) $C$ can not be in a quadric hypersurface, so $h^0(\mathcal{I}_C(1))=h^0(\mathcal{I}_C(2))=0$. Therefore $h^0(\mathcal{O}_C(1))\geq 5$ and $h^0(\mathcal{I}_C(2))\geq 15$. Then by [2, Ch. IV, Lemma 5.5] $h^0(\mathcal{O}_(3))\geq 19$. So $h^1(\mathcal{O}_C(3))=h^0(\mathcal{O}_C(3))-3d-1+g\geq 19-3d-1+g=g-3d+18>0$, again contradiction by Theorem 2.7.
\end{proof}

\begin{remark}
It is possible that there are better ways to estimate either $h^0(\mathcal{I}_C(k))$ or $h^1(\mathcal{I}_C(k))$. If that is the case, the results in Theorem 2.9 may be improved.
\end{remark}

\section{Castelnuovo theory and non-existence of smooth isolated curves in quintic threefolds}

Let $C\subset \mathbb{P}^n$ be a curve. $C$ has degree $d$ and genus $g$. Roughly speaking, Castelnuovo theory tells us that if the $g$ is `` large'' with respect to $d$, $C$ has to be contained in surfaces/hypersurfaces of ``small'' degree. More precisely, in the case of $n=4$, we have the following:

\begin{theorem}
([1, Theorem 3.7, Theorem 3.15 and Theorem 3.22]) Let $C\subset \mathbb{P}^4$ be a curve of degree $d$ and genus $g$. Then:

(i) If $g>\frac{d^2-5d+6}{6}$ and $d\geq 3$, then $C$ is degenerate.

(ii) If $C$ is non-degenerate, $g>\frac{d^2-4d+8}{8}$ and $d\geq 9$, then $C$ is contained in a non-degenerate irreducible surface of degree 3.

(iii) If $C$ is non-degenerate, $g>\frac{d^2-3d+10}{10}$, and $d\geq 144$, then $C$ is contained in a non-degenerate irreducible surface of degree 4 or less. 
\end{theorem}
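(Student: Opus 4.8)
The plan is to reduce all three statements to the study of a general hyperplane section. Since these are the classical Castelnuovo genus bound together with its first two refinements, I would set up a single mechanism and then specialize. Let $H \cong \mathbb{P}^3$ be a general hyperplane and $\Gamma = C \cap H$, a set of $d$ points. By the General (Uniform) Position Lemma, for irreducible $C$ the points of $\Gamma$ are in uniform position in $H$; in particular they are in linearly general position (any $4$ of them span $H$), and the Hilbert function $h_\Gamma(m) := \dim\bigl(\mathrm{im}(H^0(\mathcal{O}_{\mathbb{P}^3}(m)) \to H^0(\mathcal{O}_\Gamma))\bigr)$ is as large as uniform position forces it to be. The bridge to the genus is the basic inequality $g \le \sum_{m \ge 1}(d - h_\Gamma(m))$, obtained from the restriction sequences $0 \to \mathcal{O}_C(m-1) \to \mathcal{O}_C(m) \to \mathcal{O}_\Gamma \to 0$ by summing the resulting estimates on $h^1(\mathcal{O}_C(m))$. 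Thus any lower bound on $h_\Gamma$ yields an upper bound on $g$, and the contrapositive gives the stated implications.

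For (i) I would invoke Castelnuovo's Lemma: $d$ points in uniform position in $\mathbb{P}^3$ satisfy $h_\Gamma(m) \ge \min\{d, 3m+1\}$. Feeding this into the basic inequality, the summand $d - h_\Gamma(m)$ is bounded by $\max\{0, d-3m-1\}$ and vanishes once $3m+1 \ge d$; summing this arithmetic progression over $1 \le m \le \lfloor (d-2)/3\rfloor$ yields the Castelnuovo number, which one checks is at most $\frac{d^2-5d+6}{6}$. Hence a non-degenerate $C$ has $g \le \frac{d^2-5d+6}{6}$, and the hypothesis $g > \frac{d^2-5d+6}{6}$ forces $C$ to be degenerate.

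Parts (ii) and (iii) require the refined form of Castelnuovo's Lemma together with a lifting argument. The refinement is a dichotomy for $\Gamma$: either $h_\Gamma$ obeys a strictly better lower bound than $\min\{d,3m+1\}$ — which improves the genus estimate to roughly $\frac{d^2}{8}$, resp. $\frac{d^2}{10}$ — or else $\Gamma$ violates this improved bound, in which case the points of $\Gamma$ must lie on an irreducible curve $D \subset \mathbb{P}^3$ of small degree ($3$, resp. $\le 4$). In the second alternative I would lift $D$ to a surface through $C$: the forms of the relevant degree vanishing on $\Gamma$ lift to forms vanishing on $C$ via $0 \to \mathcal{I}_C(m-1) \to \mathcal{I}_C(m) \to \mathcal{I}_{\Gamma/\mathbb{P}^3}(m) \to 0$ once $h^1(\mathcal{I}_C(m-1)) = 0$, and the common zero locus of the lifted forms is the desired non-degenerate irreducible surface of degree $3$ (resp. $\le 4$) containing $C$. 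Comparing the improved bounds $\frac{d^2-4d+8}{8}$ and $\frac{d^2-3d+10}{10}$ with the hypotheses then excludes the first alternative, forcing the curve onto the surface.

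The main obstacle is the refined bound and its bookkeeping, not the lifting. Showing that a sub-generic Hilbert function forces $\Gamma$ onto a curve of prescribed small degree, and quantifying exactly how much genus is saved or lost in each branch, is the delicate part, since one must track the difference function $h_\Gamma(m) - h_\Gamma(m-1)$ through several steps and handle boundary cases carefully. The large threshold $d \ge 144$ in (iii) is a symptom of this: the error terms in the degree-$4$ refinement are only dominated by the main term once $d$ is large, so the clean inequality $g \le \frac{d^2-3d+10}{10}$ for curves off quartic surfaces becomes available only in that range. For the purposes of this paper these facts are quoted from [1], so I would ultimately cite Theorems 3.7, 3.15 and 3.22 there rather than reproduce the full case analysis.
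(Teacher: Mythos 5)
The paper gives no proof of this statement at all---it is quoted verbatim from Harris's book [1], as the bracketed citation indicates---and your proposal, which ultimately defers to the same reference, therefore matches the paper's treatment exactly. Your sketch of the underlying machinery (uniform position of the general hyperplane section $\Gamma$, the basic inequality $g \le \sum_{m\ge 1}\bigl(d - h_\Gamma(m)\bigr)$, Castelnuovo's lemma for part (i), and the refined dichotomy plus lifting argument for parts (ii) and (iii)) is a faithful outline of how [1] actually proves these results, so there is nothing to correct.
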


If we want to use Theorem 2.9 to get some non-existence results, roughly speaking, we need to show that if the genus $g$ is ``large'' with respect to degree $d$ then the curve $C\subset \mathbb{P}^4$ has to be contained in a ``low''  degree hypersurface. But Theorem 3.1(ii) and (iii) only tell us that curves with ``large'' genera are contained in ``low'' degree surfaces. Therefore, we need to show that ``low'' degree surfaces has to be contained in ``low'' degree hypersurfaces. Fortunately, we have the following:

\begin{lemma}
([9, Lemma 3]) Let $W\subset \mathbb{P}^n$ be an irreducible non-degenerate variety of dimension $m$ and degree $d$. Let $A\in W$ be a closed point; and if $W$ is a cone suppose that $A$ is not a vertex of $W$. Let $W_1$ be the cone obtained by joining $A$ to every point of $W$. Then $W_1$ does not lie in any hyperplane of $\mathbb{P}^n$, and it has dimension exactly $m+1$ and degree at most $d-1$; moreover, if $A$ is a singular point of $W$ then $W_1$ has degree at most $d-2$. 
\end{lemma}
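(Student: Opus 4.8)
The plan is to study the linear projection $\pi_A$ from the point $A$, i.e. the rational map $\mathbb{P}^n\to\mathbb{P}^{n-1}$ sending $P\neq A$ to the line $\overline{AP}$, since $W_1$ is precisely the cone with vertex $A$ over the image $Z:=\overline{\pi_A(W)}$. Two of the three assertions are then immediate. For non-degeneracy: $W\subseteq W_1$, so any hyperplane containing $W_1$ would contain $W$, contradicting the non-degeneracy of $W$. For the dimension: $W_1$ is irreducible and contains $W$, so $\dim W_1\in\{m,m+1\}$; moreover $W_1=W$ holds if and only if every line $\overline{AP}$ with $P\in W$ lies in $W$, i.e. if and only if $A$ is a vertex of $W$. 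The hypothesis rules this out (if $W$ is not a cone it has no vertex, and if it is a cone then $A$ is assumed not to be one), so $W_1\supsetneq W$ and $\dim W_1=m+1$. Equivalently, $\pi_A|_W$ is generically finite onto $Z$, whence $\dim Z=m$ and the cone $W_1$ over $Z$ has dimension $m+1$.

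The degree bound is the heart of the matter, and I would obtain it in two steps. First, reduce to the projection: realizing $Z$ inside a hyperplane $H\not\ni A$, the variety $W_1$ is the cone over $Z$ with vertex $A$, and since the degree of a cone equals the degree of its base we get $\deg W_1=\deg Z$. Second, compare $\deg Z$ with $d=\deg W$ by means of a generic linear section through $A$. Let $\mu\ge 1$ be the degree of the generically finite map $\pi_A|_W\colon W\to Z$. Choose a generic linear subspace $\Lambda'\subset\mathbb{P}^{n-1}$ of dimension $n-1-m$ meeting $Z$ transversally in $\deg Z$ general points, each with exactly $\mu$ preimages in $W$, all away from $A$. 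Its preimage $\Lambda:=\pi_A^{-1}(\Lambda')$ is a linear subspace of $\mathbb{P}^n$ of complementary dimension $n-m$ passing through $A$, and $(W\cap\Lambda)\setminus\{A\}$ is exactly the set of those $\mu\cdot\deg Z$ preimage points.

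Since $\Lambda$ has complementary dimension and meets $W$ in dimension $0$, refined B\'ezout gives $d=\sum_{P\in W\cap\Lambda} i(P;W,\Lambda)$. Separating the contribution of $A$ and using the standard lower bound $i(A;W,\Lambda)\ge\operatorname{mult}_A(W)$ for the local intersection number of $W$ with a linear space of complementary dimension through $A$, I obtain
$$ d \;=\; i(A;W,\Lambda)+\!\!\sum_{P\in(W\cap\Lambda)\setminus\{A\}}\!\! i(P;W,\Lambda)\;\ge\; \operatorname{mult}_A(W)+\mu\,\deg Z \;\ge\; \operatorname{mult}_A(W)+\deg W_1. $$
Hence $\deg W_1\le d-\operatorname{mult}_A(W)$. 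If $A$ is a smooth point of $W$ this reads $\deg W_1\le d-1$, and if $A$ is singular then $\operatorname{mult}_A(W)\ge 2$ gives $\deg W_1\le d-2$, as claimed.

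The main obstacle is the genericity bookkeeping in this degree count, not the inequality $i(A;W,\Lambda)\ge\operatorname{mult}_A(W)$ (which holds with no genericity at all). The care lies in choosing $\Lambda'$ general enough that (a) $\Lambda=\pi_A^{-1}(\Lambda')$ still meets $W$ in a zero-dimensional scheme, so that the degree $d$ is genuinely distributed among finitely many points and B\'ezout applies, and (b) the $\deg Z$ points of $Z\cap\Lambda'$ are general points of $Z$, whose preimages in $W$ are $\mu\,\deg Z$ distinct points disjoint from the vertex $A$, so that the vertex and the fiber contributions do not interfere and the projection degree $\mu$ enters with the correct multiplicity. Verifying that pulling a generic section back through $A$ creates no excess-dimensional intersection with $W$ is the one point that needs a short dimension-count argument; everything else is routine.
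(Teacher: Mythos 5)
The paper never proves this lemma: it is quoted verbatim from Swinnerton-Dyer [9, Lemma 3] and used as a black box, so there is no in-paper argument to compare yours against; it has to stand on its own, and it does. Your route is the classical projection argument, correctly executed. Non-degeneracy is immediate from $W\subseteq W_1$; the dimension claim is right, with the vertex hypothesis entering exactly where it must (to exclude $W_1=W$); and the degree bound reduces, via the identification of $W_1$ with the cone over $Z=\overline{\pi_A(W)}$ and the fact that a cone has the same degree as its base, to the inequality $d\ge \mathrm{mult}_A(W)+\mu\deg Z$, which you get from B\'ezout for the zero-dimensional intersection of $W$ with the complementary-dimensional linear space $\Lambda$, splitting off the vertex term $i(A;W,\Lambda)\ge \mathrm{mult}_A(W)$ (the standard multiplicity bound for proper intersections, which indeed needs no genericity) and counting at least one for each remaining point; $\mathrm{mult}_A(W)\ge 2$ at a singular point then gives the refined bound. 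The genericity bookkeeping you flag as the main obstacle resolves exactly as you indicate: the locus of $z\in Z$ lying outside the dense constructible image of $\pi_A|_{W\setminus\{A\}}$, or over which the fiber is not $\mu$ reduced points, is a proper closed subset of the $m$-dimensional $Z$, so a general $\Lambda'$ of codimension $m$ in $\mathbb{P}^{n-1}$ avoids it while meeting $Z$ transversally in $\deg Z$ points; then $(W\cap\Lambda)\setminus\{A\}$ is a finite union of finite fibers, and $W\cap\Lambda$ is automatically finite (a positive-dimensional component through $A$ would force infinitely many points into that finite set), so the intersection is proper at $A$ and B\'ezout applies. Two cosmetic remarks: for the stated bound you only need each fiber over $Z\cap\Lambda'$ to be non-empty, so the exact factor $\mu$ is dispensable; and the irreducibility of $W_1$, used tacitly in the dimension step, deserves its one-line justification ($W_1$ is the image of the $\mathbb{P}^1$-bundle over $W$ parametrizing the joining lines).
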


Now the following is just an easy consequence of Lemma 3.2.

\begin{proposition}
Let $X\subset \mathbb{P}^4$ be a non-degenerate irreducible surface of degree $d$. Then $X$ is contained in a hypersurface of degree $d-1$; moreover, if $X$ has a singular point which is not a vertex of $X$, then $X$ is contained in a hypersurface of degree $d-2$.
\end{proposition}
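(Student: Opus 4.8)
The plan is to apply Lemma 3.2 directly to $W = X$, with $n = 4$ and $m = 2$. First I would choose a closed point $A \in X$ with the property that, in case $X$ happens to be a cone, $A$ is not one of its vertices; since the vertex locus of a cone is a proper closed subvariety of $X$, a general point of $X$ has this property. With such a choice, Lemma 3.2 produces the cone $W_1$ obtained by joining $A$ to every point of $X$, and guarantees that $W_1$ is non-degenerate, has dimension exactly $3$, and has degree at most $d-1$.

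Next I would observe that $X \subseteq W_1$: every point $P \in X$ with $P \neq A$ lies on the line $\overline{AP} \subset W_1$, and $A$ itself is the vertex of the cone, so all of $X$ is swept into $W_1$. The crucial structural point is then that $W_1$, being an irreducible subvariety of $\mathbb{P}^4$ of dimension $3$, has codimension $1$ and is therefore a hypersurface, defined by a single irreducible homogeneous polynomial $F$. Since for a hypersurface the degree as a projective variety coincides with $\deg F$, we obtain $\deg F \le d-1$. Hence $X$ is contained in $V(F)$, a hypersurface of degree $\le d-1$; multiplying $F$ by a suitable linear form if necessary, $X$ is contained in a hypersurface of degree exactly $d-1$.

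For the ``moreover'' clause I would repeat the argument verbatim but take $A$ to be the given singular point of $X$ that is not a vertex of $X$. The second conclusion of Lemma 3.2 then improves the degree bound to $\deg W_1 \le d-2$, and the same hypersurface reasoning yields that $X$ is contained in a hypersurface of degree $d-2$.

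The steps are all essentially immediate once Lemma 3.2 is in hand, so I do not expect a serious obstacle. The only points requiring care are the legitimacy of the choice of $A$ (ensuring it is not a vertex in the case where $X$ is a cone) and the standard identification of a $3$-dimensional irreducible subvariety of $\mathbb{P}^4$ with a hypersurface whose variety-degree equals the degree of its defining polynomial. I expect the mild bookkeeping of passing from ``degree $\le d-1$'' to ``degree exactly $d-1$'' to be the most pedestrian part rather than any genuine difficulty.
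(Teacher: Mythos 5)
Your proposal is correct and follows essentially the same route as the paper: both choose a point $A\in X$ (not a vertex if $X$ is a cone, or the given non-vertex singular point for the ``moreover'' clause), form the cone $W_1=X_1$ over $X$ from $A$, and invoke Lemma 3.2 to bound its degree. The only difference is cosmetic: you identify the three-dimensional cone $W_1$ directly as a hypersurface in $\mathbb{P}^4$ (codimension one irreducible variety equals zero locus of a single polynomial of the same degree), whereas the paper reaches the same conclusion by projecting $X$ from $A$ to a hyperplane $H\cong\mathbb{P}^3$ and taking the cone over the defining polynomial of the image surface $X_1\cap H$; your version is, if anything, slightly more direct.
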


\begin{proof}
Let $A\in X$ be a closed point, and if $X$ is a cone suppose $A$ is not a vertex of $X$. Let $X_1$ be the cone obtained by joining $A$ to every point of $X$. By Lemma 3.2, $X_1$ is an irreducible non-degenerate threefold of degree at most $d-1$( $d-2$ if $A$ is a singular point of $X$). Notice that projection of $X$ from $A$ to a hyperplane $H$ not containing $A$ is exactly equal to the intersection $X_1\cap H$ which is a surface in $H$ of degree at most $d-1$( $d-2$ if $A$ is a singular point of $X$). Obviously, any surface of degree at most $d-1$ ( $d-2$ ) in $H\cong \mathbb{P}^3$ is the zero locus of a polynomial of degree at most $d-1$ ( $d-2$ ), so we are done.
\end{proof}

\begin{remark}
Notice if that the surface $X$ in Proposition 3.3 is smooth, $X$ is even $d-1$-regular and hence the homogeneous ideal of $X$ is even generated by polynomials of degree $d-1$ or less ( Cf. [5] ).
\end{remark}

Finally, we are ready to prove the following non-existence results:

\begin{theorem}
Let $d\geq 3$ and $g\geq 0$ be integers. Let $C\subset \mathbb{P}^4$ be a curve of degree $d$ and genus $g$. Then $C$ can not be isolated in any smooth quintic threefolds if the pair $(d,\; g)$ is in the following list: 

(i) $g>d-3$, $(d,g)\neq (3,1)$ and $3\leq d\leq 8$;

(ii) $g>2d-11$ and $9\leq d\leq 12$;

(iii) $g>\frac{d^2-4d+8}{8}$ and $12<d<144$;

(iv) $g>\frac{d^2-3d+10}{10}$ and $d\geq 144$.

\end{theorem}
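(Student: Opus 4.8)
The plan is to argue by contradiction. Assume $C$ is isolated in some smooth quintic threefold $Y$; then Lemma 2.1 gives the single structural input $h^1(\mathcal{N}_{C/\mathbb{P}^4})=h^1(\mathcal{O}_C(5))$, which is exactly the running hypothesis of Theorem 2.7 and Theorem 2.9. The whole argument is then a collision between two opposing tendencies: Theorem 2.9 (isolation plus a Castelnuovo-type bound \emph{from below}) forbids $C$ from lying on hypersurfaces of small degree once $g$ is large relative to $d$, while Theorem 3.1 together with Proposition 3.3 (Castelnuovo theory \emph{from above}) forces $C$ onto a hypersurface of small degree once $g$ is large relative to $d$. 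For each of the four ranges I will verify that the stated genus threshold simultaneously clears the thresholds required by both tendencies, and read off the contradiction.

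Since Theorem 2.9 assumes $C$ is not contained in a plane, I would first settle the planar case. If $C\subset\mathbb{P}^2\subset\mathbb{P}^4$, then $C$ lies on a hyperplane, so Theorem 2.7 (in its degree-$1$ case) forces $h^1(\mathcal{O}_C(1))=0$; but adjunction gives $\mathcal{K}_C\cong\mathcal{O}_C(d-3)$, whence $h^1(\mathcal{O}_C(1))=h^0(\mathcal{O}_C(d-4))\geq 1$ as soon as $d\geq 4$. The only smooth plane curve of degree $\leq 3$ with positive genus is the plane cubic of genus $1$, i.e. the excluded pair $(3,1)$; this is precisely the source of that exception. Hence I may henceforth assume $C$ is not contained in a plane and invoke Theorem 2.9 at will.

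For range (iv), with $d\geq 144$, I would first check that $\frac{d^2-3d+10}{10}$ dominates each of $d-3$, $2d-11$ and $3d-18$ (each comparison reduces to a quadratic in $d$ whose roots lie far below $144$), so that Theorem 2.9(i),(ii),(iii) make $C$ non-degenerate and put it on no quadric and no cubic. Meanwhile Theorem 3.1(iii) places $C$ on a non-degenerate irreducible surface $X$ of degree $\leq 4$, and since such a surface in $\mathbb{P}^4$ has degree $\geq 3$, Proposition 3.3 drops $X$ (hence $C$) onto a quadric when $\deg X=3$ or a cubic when $\deg X=4$---contradicting Theorem 2.9 either way. Ranges (ii) and (iii) are the same maneuver but lighter: there the threshold need only dominate $d-3$ and $2d-11$ (via the factorizations $(d-4)(d-8)$ and $(d-8)(d-12)$), Theorem 3.1(ii) supplies a cubic surface $X$ (which is where $d\geq 9$ is used), and Proposition 3.3 then forces $C$ onto a quadric, contradicting Theorem 2.9(ii).

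Range (i), with $3\leq d\leq 8$, is the delicate one and the main obstacle. Here $g$ is only slightly above $d-3$, so the genus is too small to make $h^1(\mathcal{O}_C(5))$---or any $h^1(\mathcal{O}_C(k))$---positive, and the mechanism ``$C$ lies on a hypersurface, hence some $h^1$ vanishes, contradiction'' has nothing to act on. The way out is to pit Theorem 2.9(i) directly against Theorem 3.1(i): for $4\leq d\leq 8$ one has $d-3\geq\frac{d^2-5d+6}{6}$ (equivalent to $d-2\leq 6$), and at $d=3$ both sides are $0$, so $g>d-3$ already gives $g>\frac{d^2-5d+6}{6}$. Then Theorem 3.1(i) declares $C$ degenerate while Theorem 2.9(i) declares it non-degenerate---an immediate contradiction. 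I expect the only genuine care in the write-up to lie in keeping the planar case cleanly separated (so the $(3,1)$ exclusion is transparent) and in recording the handful of quadratic threshold comparisons, none of which is deep.
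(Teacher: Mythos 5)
Your proposal is correct and follows essentially the same route as the paper: Lemma 2.1 reduces isolation to the condition $h^1(\mathcal{N}_{C/\mathbb{P}^4})=h^1(\mathcal{O}_C(5))$, which is then played off against Castelnuovo theory (Theorem 3.1 plus Proposition 3.3) via Theorem 2.9, range by range, with exactly the paper's threshold comparisons. If anything, your write-up is slightly more complete than the paper's, since you justify the planar case via Theorem 2.7 and adjunction (the paper leaves it as ``easy to check'') and you spell out case (iv), which the paper dismisses with ``similarly.''
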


\begin{proof}
(i) Assume $g>d-3$, $(d,g)\neq (3,1)$ and $3\leq d\leq 8$. Notice that when $3\leq d\leq 8$, $d-3\geq \frac{d^2-5d+6}{6}$, so  $g>\frac{d^2-5d+6}{6}$, then by Theorem 3.1(i) $C$ is contained in a hyperplane. Therefore, by Theorem 2.9(i) $C$ has to be contained in a plane. But it is easy to check that if $C$ is contained in a plane, $h^1(\mathcal{N}_{C/\mathbb{P}^4})=h^1(\mathcal{O}_C(5))$ only if $(d,g)=(3,1)$. But by assumption $(d,g)\neq (3,1)$, so $C$ can not be isolated in any smooth quintic threefolds by Lemma 2.1.

(ii) Assume  $g>2d-11$ and $9\leq d\leq 12$. Notice that in this case, $g>\frac{d^2-4d+8}{8}$, then by Theorem 3.1(ii) and Proposition 3.3, $h^0(\mathcal{I}_C(2))\neq 0$. Thus, by Theorem 2.9(ii) $h^1(\mathcal{N}_{C/\mathbb{P}^4})\neq h^1(\mathcal{O}_C(5))$, so $C$ can not be isolated in any smooth quintic threefolds by Lemma 2.1.

(iii) Assume $g>\frac{d^2-4d+8}{8}$ and $12<d<144$. Notice that in this case $\frac{d^2-4d+8}{8}\geq 2d-11$, then the rest of the argument is similar to case (ii).

(iv) Similarly as in cases (ii) and (iii).

\end{proof}

More intuitively, we can see non-existence/existence of smooth isolated curves in general quintic threefolds from the following figure.

\center{Figure 1}

\includegraphics{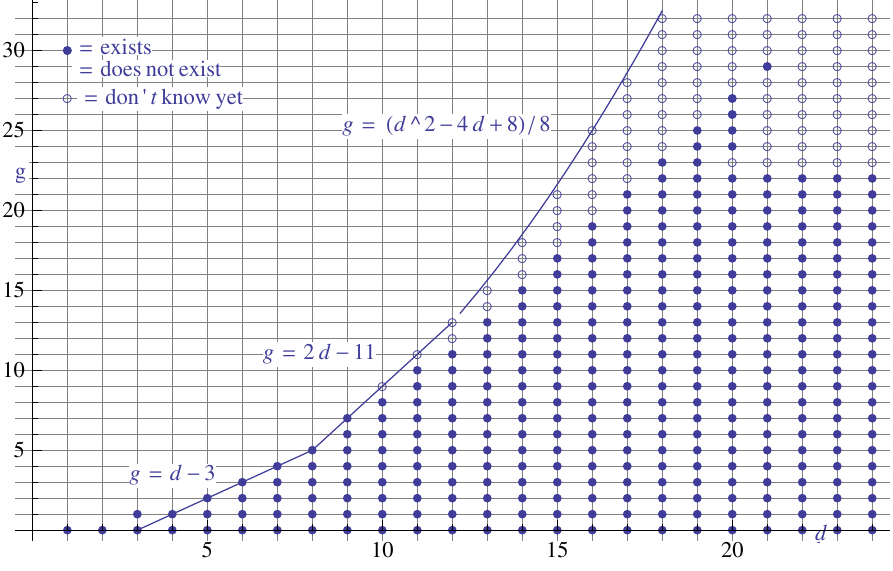}

\vspace{2mm}

As an application of Theorem 3.5, we get the following:

\begin{corollary}
If there exists a smooth isolated curve of degree $d\leq 9$ and genus $g$ in a general quintic threefold, then the pair of integers $(d, g)$ must be in Knutsen's list in [3, Theorem 1.2]. In other words, Knutsen's list [3, Theorem 1.2] is complete for $Y=(5)\subset \mathbb{P}^4$ and $d\leq 9$.
\end{corollary}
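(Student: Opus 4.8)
The plan is to combine Theorem 3.5 with the explicit content of Knutsen's list to show that, for every pair $(d,g)$ with $3\leq d\leq 9$ that is \emph{not} in Knutsen's list, no smooth curve of that degree and genus can be isolated in a general quintic threefold. Since existence of an isolated curve in a \emph{general} quintic is equivalent to existence in \emph{some} smooth quintic (a standard semicontinuity/incidence-variety argument, using that the Hilbert scheme is projective and the locus of quintics containing a given curve is constructible), Theorem 3.5 directly rules out all pairs it covers. So the first step is to tabulate, for each $d\in\{3,4,5,6,7,8,9\}$, exactly which genera $g\geq 0$ are \emph{not} forbidden by Theorem 3.5.

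Concretely, I would go degree by degree. For $3\leq d\leq 8$, Theorem 3.5(i) eliminates every pair with $g>d-3$ except $(d,g)=(3,1)$; so the only genera that survive are $g\leq d-3$ together with the special point $(3,1)$. For $d=9$, Theorem 3.5(ii) applies (with $9\leq d\leq 12$), eliminating every $g>2d-11=7$; so the survivors at $d=9$ are $g\leq 7$. This produces a finite, completely explicit list of candidate pairs $(d,g)$ that Theorem 3.5 does not exclude. The second step is then purely bookkeeping: I would check that every one of these surviving pairs actually appears in Knutsen's list in [3, Theorem 1.2]. Because the number of survivors is small (the constraint $g\leq d-3$ for $d\leq 8$, plus $(3,1)$, plus $g\leq 7$ for $d=9$), this is a finite verification against the tabulated cases in [3].

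I expect the main obstacle to be precisely this last bookkeeping step: one must confirm that Theorem 3.5 and Knutsen's list are \emph{exactly} complementary in the range $d\leq 9$, i.e.\ that no surviving pair is missing from Knutsen's list. Two subtleties deserve care. First, at the boundary the special pair $(3,1)$ must be checked by hand: a smooth plane cubic (an elliptic curve) lying in a plane inside $\mathbb{P}^4$ does satisfy $h^1(\mathcal{N}_{C/\mathbb{P}^4})=h^1(\mathcal{O}_C(5))$ by the computation in the proof of Theorem 3.5(i), so it is \emph{not} excluded by our methods and must therefore be present in Knutsen's list — and one should verify its genuine realizability as an isolated curve. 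Second, for each surviving pair with small $g$ one should make sure it is the low-genus case that Knutsen indeed records, rather than being vacuously excluded for some other reason (e.g.\ no smooth curve of that $(d,g)$ exists at all in $\mathbb{P}^4$).

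Finally, I would remark that the logical structure is an implication, not an equivalence: Corollary 3.6 asserts only that any isolated curve of degree $\leq 9$ \emph{must} have its $(d,g)$ in Knutsen's list, so I do \emph{not} need to re-prove that every pair in Knutsen's list is realized by an isolated curve — that direction is Knutsen's existence theorem, which I may assume. Thus the corollary follows by assembling the exclusions of Theorem 3.5 over $3\leq d\leq 9$ and matching the finite complement against [3, Theorem 1.2].
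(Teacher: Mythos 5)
Your proposal is correct and takes essentially the same approach as the paper: Corollary 3.6 is stated there as an immediate application of Theorem 3.5, the (unwritten) proof being exactly the finite tabulation of surviving pairs $(d,g)$ for $3\leq d\leq 9$ and their comparison against Knutsen's list that you describe. Two minor remarks: your semicontinuity ``equivalence'' is unnecessary, since a general quintic is smooth and Theorem 3.5 excludes isolation in \emph{any} smooth quintic, so only the trivial direction is used; and the degrees $d=1,2$ (where a smooth irreducible curve is a line or a conic, forcing $g=0$, a pair in Knutsen's list) should be mentioned for completeness since the corollary covers all $d\leq 9$.
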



\begin{thebibliography}{99}




\bibitem{1}
 J. Harris, with the collaboration of D. Eisenbud, \emph{Curves in projective space}, Semin Math. Super, University of Montreal 1982.

\bibitem{2}
R. Hartshorne, \emph{Algebraic Geometry}, Berlin; Springer 1977.


 \bibitem{3}
     A. L.  Knutsen, \emph{On isolated smooth curves of low genera in Calabi-Yau complete intersection threefolds},   Trans. Amer. Math. Soc. 364, 5243-5264 (2012).

\bibitem{4}
J. Koll\'{a}r, \emph{Rational curves on algebraic varieties}, Springer Verlag (1996) Ergebnisse der Math. vol. 32


\bibitem{5}
R. Lazarsfeld, \emph{A sharp Castelnuovo bound for smooth surfaces}, Duke Math. J. Vol. 55, 423-429 (1987)


\bibitem{6}
R. Lazarsfeld, \emph{Positivity in Algebraic Geometry I}, Springer-Verlag, 2004


\bibitem{7}
 E. Park, \emph{On hypersurfaces containing projective varieties},  arXiv:1104.5599.

\bibitem{8}
J. Rathmann, \emph{The genus of curves in $\mathbb{P}^4$ and $\mathbb{P}^5$}, Math. Z. 202, 525-543 (1989)

\bibitem{9} 
H. P. F. Swinnerton-Dyer, \emph{An enumeration of all varieties of degree 4}, Amer. J. of Math. Vol. 95,  403-418 (1973)


\bibitem{10} 
X. Yu, \emph{On smooth and isolated curves in general complete intersection Calabi-Yau threefolds}, arXiv:1208.6282.



   \end{thebibliography}
\end{document}